\providecommand{\U}[1]{\protect\rule{.1in}{.1in}}
\providecommand{\U}[1]{\protect \rule{.1in}{.1in}}
\newtheorem{theorem}{Theorem}[section]
\newtheorem{definition}[theorem]{Definition}
\newtheorem{remark}[theorem]{Remark}
\newenvironment{proof}[1][Proof]{\noindent \textbf{#1.} }{\  $\Box$}
\begin{document}

\title{A note on characterizations of $G$-normal distribution }
\author{Peng Luo\thanks{School of Mathematics and Qilu Securities Institute for Financial Studies, Shandong University and Department of Mathematics and Statistics, University of Konstanz; pengluo1989@gmail.com}
\and Guangyan Jia\thanks{Qilu Securities Institute for Financial Studies, Shandong University; jiagy@sdu.edu.cn}}
\date{}
\maketitle

%%%%%%%%%%%%%%%%%%%%%%%%%%%%%%%%%%%%%%%%%%%%%%%%%%%%%%%%%%%%%%%%%%%%%%%%%%%%%%%%%%%%%%%%%%%%%%%%%%%%%%%%%%%%%%%

\begin{abstract}
In this paper, we show that the $G$-normality of $X$ and $Y$ can be characterized according to the form of $f$ such that the distribution of $\lambda X+f(\lambda)Y$ does not depend on $\lambda$, where $Y$ is an independent copy of $X$ and $\lambda$ is in the domain of $f$. Without the condition that $Y$ is identically distributed with $X$, we still have a similar argument.

\textbf{Keywords}:  $G$-normal distribution, Characterization.
\\
\\
\textbf{Mathematics Subject Classification (2010). }60H30, 60H10.
\end{abstract}

\section{Introduction}
In the classical framework, let $X$ and $Y$ be two independent random vectors and $f$ be a function defined on an interval of $\mathbb{R}$. Nguyen and Sampson (\cite{Ng}) obtained that the distribution of $X$ and $Y$ can be characterized according to the form of $f$ such that the distribution of the random vector $\lambda X+f(\lambda)Y$ does not depend on $\lambda$, where $\lambda$ takes on values in the domain of $f$. These results complement previously obtained characterizations where $X$ and $Y$ are required to be identically distributed and for one value $\lambda^*$, $\lambda^*+f(\lambda^*)Y$ has the same distribution as $X$ which are discussed in Kagan et al. (\cite{Ka}). All these results are related to the Marcinkiewicz theorem (see \cite{Ka}), which simply says that under suitable conditions if $X$ and $Y$ are independent and identically distributed, and $\lambda_1X+\tau_1Y$ and $\lambda_2X+\tau_2Y$ have the same distribution, then $X$ and $Y$ have a normal distribution. Note that, in fact, $\tau_i$, $i=1,2$, can not be arbitrary constants, but must satisfy $\tau_i=\sqrt{1-\lambda_{i}^2}$, $i=1,2$.

Recently, Peng systemically established a time-consistent fully nonlinear
expectation theory (see \cite{Peng1}, \cite{Peng2} and \cite{Peng3}).

As a typical and important case, Peng (2006) introduced the
$G$-expectation theory(see \cite{Peng4} and the references therein).
In the  $G$-expectation framework ($G$-framework for short), the
notion of independence, identically distributed and $G$-normal distribution were established.

Motivated by their works, we obtain several characterizations of $G$-normal distribution. The paper is organized as follow: In section 2, we recall some notations and results that we will use in this paper. In section 3, we obtain our main results.
\section{Preliminaries}
We present some preliminaries in the theory of sublinear
expectation, $G$-normal distribution under $G$-framework. More details can be found in Peng \cite{Peng4}.

\begin{definition}\label{def2.1}
Let $\Omega$ be a given set and let $\mathcal{H}$ be a vector
lattice of real valued functions defined on $\Omega$, namely $c\in \mathcal{H}$
for each constant $c$ and $|X|\in \mathcal{H}$ if $X\in \mathcal{H}$.
$\mathcal{H}$ is considered as the space of random variables. A sublinear
expectation $\mathbb{\hat{E}}$ on $\mathcal{H}$ is a functional $\mathbb{\hat
{E}}:\mathcal{H}\rightarrow \mathbb{R}$ satisfying the following properties:
for all $X,Y\in \mathcal{H}$, we have
%\begin{description}
\item[(a)] Monotonicity: If $X\geq Y$ then $\mathbb{\hat{E}}[X]\geq
\mathbb{\hat{E}}[Y]$;
\item[(b)] Constant preservation: $\mathbb{\hat{E}}[c]=c$;
\item[(c)] Sub-additivity: $\mathbb{\hat{E}}[X+Y]\leq \mathbb{\hat{E}
}[X]+\mathbb{\hat{E}}[Y]$;

\item[(d)] Positive homogeneity: $\mathbb{\hat{E}}[\lambda X]=\lambda
\mathbb{\hat{E}}[X]$ for each $\lambda \geq 0$.\\
%\end{description}
$\left(\Omega,\mathcal{H},\mathbb{\hat{E}}\right)$ is called a sublinear expectation space.
\end{definition}

\begin{definition}
\label{def2.2} Let $X_{1}$ and $X_{2}$ be two $n$-dimensional random vectors
defined respectively in sublinear expectation spaces $\left(\Omega_{1}%
,\mathcal{H}_{1},\mathbb{\hat{E}}_{1}\right)$ and $\left(\Omega_{2},\mathcal{H}%
_{2},\mathbb{\hat{E}}_{2}\right)$. They are called identically distributed, denoted
by $X_{1}\overset{d}{=}X_{2}$, if $\mathbb{\hat{E}}_{1}\left[\varphi(X_{1}%
)\right]=\mathbb{\hat{E}}_{2}\left[\varphi(X_{2})\right]$, for all$\  \varphi \in C_{b.Lip}%
(\mathbb{R}^{n})$, where $C_{b.Lip}(\mathbb{R}^{n})$ denotes the space of
bounded and Lipschitz functions on $\mathbb{R}^{n}$.
\end{definition}

\begin{definition}
\label{def2.3} In a sublinear expectation space $\left(\Omega,\mathcal{H}%
,\mathbb{\hat{E}}\right)$, a random vector $Y=(Y_{1},\cdot \cdot \cdot,Y_{n})$,
$Y_{i}\in \mathcal{H}$, is said to be independent of another random vector
$X=(X_{1},\cdot \cdot \cdot,X_{m})$, $X_{i}\in \mathcal{H}$ under $\mathbb{\hat
{E}}[\cdot]$, denoted by $Y\bot X$, if for every test function $\varphi \in
C_{b.Lip}(\mathbb{R}^{m}\times \mathbb{R}^{n})$ we have $\mathbb{\hat{E}%
}[\varphi(X,Y)]=\mathbb{\hat{E}}\left[\mathbb{\hat{E}}[\varphi(x,Y)]_{x=X}\right]$.
\end{definition}

\begin{definition}
\label{def2.4} ($G$-normal distribution) A $d$-dimensional random vector
$X=(X_{1},\cdot \cdot \cdot,X_{d})$ in a sublinear expectation space
$\left(\Omega,\mathcal{H},\mathbb{\hat{E}}\right)$ is called $G$-normally distributed if
for each $a,b\geq0$ we have
\[
aX+b\bar{X}\overset{d}{=}\sqrt{a^{2}+b^{2}}X,
\]
where $\bar{X}$ is an independent copy of $X$, i.e., $\bar{X}\overset{d}{=}X$
and $\bar{X}\bot X$. Here the letter $G$ denotes the function
\[
G(A):=\frac{1}{2}\mathbb{\hat{E}}[\langle AX,X\rangle]:\mathbb{S}%
_{d}\rightarrow \mathbb{R},
\]
where $\mathbb{S}_{d}$ denotes the collection of $d\times d$ symmetric matrices.
\end{definition}

Peng \cite{Peng4} showed that $X=(X_{1},\cdot \cdot \cdot,X_{d})$ is $G$-normally
distributed if and only if for each $\varphi \in C_{b.Lip}(\mathbb{R}^{d})$,
$u(t,x):=\mathbb{\hat{E}}[\varphi(x+\sqrt{t}X)]$, $(t,x)\in \lbrack
0,\infty)\times \mathbb{R}^{d}$, is the solution of the following $G$-heat
equation:%
\[
\partial_{t}u-G(D_{x}^{2}u)=0,\ u(0,x)=\varphi(x).
\]

The function $G(\cdot):\mathbb{S}_{d}\rightarrow \mathbb{R}$ is a monotonic,
sublinear mapping on $\mathbb{S}_{d}$ and $G(A)=\frac{1}{2}\mathbb{\hat{E}%
}[\langle AX,X\rangle]\leq \frac{1}{2}|A|\mathbb{\hat{E}}[|X|^{2}]$ implies
that there exists a bounded, convex and closed subset $\Gamma \subset
\mathbb{S}_{d}^{+}$ such that
\[
G(A)=\frac{1}{2}\sup_{\gamma \in \Gamma}\mathrm{tr}[\gamma A],
\]
where $\mathbb{S}_{d}^{+}$ denotes the collection of nonnegative elements in
$\mathbb{S}_{d}$.

\section{Characterizations of $G$-normal distribution}
We only consider non-degenerate random variable $X$ on a sublinear expectation space
$\left(\Omega,\mathcal{H},\hat{\mathbb{E}}\right)$, i.e. $\hat{\mathbb{E}}[X^2]>\left(\hat{\mathbb{E}}[|X|]\right)^2$. From the definition of $G$-normal distribution, an equivalent characterization of $G$-normal distribution is that,for any $a, b>0$
$$\frac{a}{\sqrt{a^{2}+b^{2}}}X+\frac{b}{\sqrt{a^{2}+b^{2}}}Y\overset{d}{=}X,$$
where $Y$ is an independent copy of $X$.  Denote $\lambda=\frac{a}{\sqrt{a^{2}+b^{2}}}$,
then
 $$\lambda X+\sqrt{1-\lambda^2}Y\overset{d}{=}X.$$
We are interested in the case that $\sqrt{1-\lambda^2}$ is replaced by $f(\lambda)$ which is a nonnegative function of $\lambda$. Actually we have the following theorem
\begin{theorem}\label{thm1}
Let $f$ be a nonnegative function defined on some interval of $\mathbb{R}$, which contains $0$ as an interior point and $X$ be a non-degenerate random variable on a sublinear expectation space
$\left(\Omega,\mathcal{H},\hat{\mathbb{E}}\right)$, for all $\lambda$ such that $f(\lambda)$ is non-negative,
$$\lambda X+f(\lambda)Y\overset{d}{=}X$$
where $Y$ is an independent copy of $X$, then:
\begin{description}
\item[(i)] $X$ is $G$-normal distributed;
\item[(ii)] $f(\lambda)=\sqrt{1-\lambda^2}$.
\end{description}
\end{theorem}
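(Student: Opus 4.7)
My plan is to proceed in four stages: identifying $f(0)$, showing $\hat{\mathbb{E}}[X]=\hat{\mathbb{E}}[-X]=0$, deriving $f(\lambda)^2=1-\lambda^2$ from the second moment, and invoking Peng's $G$-heat equation characterization of $G$-normality to conclude (i). Setting $\lambda=0$ in the hypothesis gives $f(0)Y\overset{d}{=}X$; since $Y\overset{d}{=}X$ and $\sigma^2:=\hat{\mathbb{E}}[X^2]>0$ by non-degeneracy, the test function $\varphi(x)=x^2$ forces $f(0)=1$.

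For the mean-zero step I would apply $\varphi(x)=\pm x$ to the distributional identity and use the independence rule $\hat{\mathbb{E}}[\varphi(X,Y)]=\hat{\mathbb{E}}[\hat{\mathbb{E}}[\varphi(x,Y)]_{x=X}]$ together with positive homogeneity. For $\lambda>0$ in the domain this yields $(1-\lambda-f(\lambda))\hat{\mathbb{E}}[\pm X]=0$, so either both upper and lower means vanish or $f(\lambda)=1-\lambda$ on an interval $(0,\varepsilon)$. Once $\hat{\mathbb{E}}[Y]=\hat{\mathbb{E}}[-Y]=0$, sub-additivity applied in both directions yields the key identity
\[
\hat{\mathbb{E}}[cY+dY^2]=d\,\hat{\mathbb{E}}[Y^2]\qquad(c\in\mathbb{R},\ d\geq 0),
\]
and plugging this into the independence expansion of $\hat{\mathbb{E}}[(\lambda X+f(\lambda)Y)^2]$ collapses the inner expectation to $\lambda^2x^2+f(\lambda)^2\sigma^2$; the outer expectation then produces $\sigma^2=(\lambda^2+f(\lambda)^2)\sigma^2$, so $f(\lambda)=\sqrt{1-\lambda^2}$, establishing (ii). To rule out the spurious $f(\lambda)=1-\lambda$: if $\hat{\mathbb{E}}[X]+\hat{\mathbb{E}}[-X]=0$ with $\hat{\mathbb{E}}[X]\neq 0$, the centred variable $Z:=X-\hat{\mathbb{E}}[X]$ has vanishing upper and lower mean and satisfies the same identity, so the second-moment argument forces $\lambda^2+(1-\lambda)^2=1$ throughout $(-\varepsilon,\varepsilon)$, which is impossible; if $\hat{\mathbb{E}}[X]+\hat{\mathbb{E}}[-X]>0$, a first-order expansion of $\psi(c):=\hat{\mathbb{E}}[(Y+c)^2]$ at $c=0$ applied to the identity $\sigma^2=(1-\lambda)^2\hat{\mathbb{E}}[\psi(\lambda X/(1-\lambda))]$ eventually yields $\sigma^2\leq(\hat{\mathbb{E}}[|X|])^2$, contradicting non-degeneracy.

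For (i), with $f(\lambda)=\sqrt{1-\lambda^2}$ on $(-\varepsilon,\varepsilon)$ and $\hat{\mathbb{E}}[\pm X]=0$, I would define $u(t,x):=\hat{\mathbb{E}}[\varphi(x+\sqrt{t}X)]$ for $\varphi\in C_{b.Lip}(\mathbb{R})$ and show it solves the $G$-heat equation. Rewriting the identity as $\sqrt{s+t}X\overset{d}{=}\sqrt{s}X+\sqrt{t}Y$ whenever $s/(s+t)<\varepsilon^2$---which covers any fixed $t>0$ and all sufficiently small $s>0$---produces the local semigroup $u(t+s,x)=\hat{\mathbb{E}}[u(t,x+\sqrt{s}X)]$; a first-order expansion in $s$, together with the identity $\hat{\mathbb{E}}[cX+dX^2]=\hat{\mathbb{E}}[dX^2]$ valid for all real $c,d$ (since $\hat{\mathbb{E}}[\pm cX]=0$), gives $\partial_t u=G(D_x^2 u)$ with $G(a)=\tfrac12(\bar\sigma^2 a^+-\underline\sigma^2 a^-)$, where $\bar\sigma^2=\hat{\mathbb{E}}[X^2]$ and $\underline\sigma^2=-\hat{\mathbb{E}}[-X^2]$, so Peng's characterization gives (i). The main obstacle will be the mean-zero step: excluding the degenerate $f(\lambda)=1-\lambda$ requires a genuinely sublinear argument exploiting the non-degeneracy of $X$, since the classical mean-variance algebra does not transfer directly to the $G$-framework.
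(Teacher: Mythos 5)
Your proposal is correct in outline and, for the heart of the argument (forcing $\hat{\mathbb{E}}[X]=\hat{\mathbb{E}}[-X]=0$ from the first moments and then reading $f(\lambda)^2=1-\lambda^2$ off the second moments via the independence expansion), it follows essentially the paper's route, with the same dichotomy ``either the means vanish or $f(\lambda)=1-\lambda$.'' The differences are worth noting. For the case of a nonzero certain mean your centering trick ($Z=X-\hat{\mathbb{E}}[X]$ satisfies the same identity with zero means, forcing $\lambda^2+(1-\lambda)^2=1$) is a cleaner variant of the paper's inequality $2\alpha(1-\alpha)\overline{\sigma}^2\leq 2\alpha(1-\alpha)\overline{\mu}^2$. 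For the mean-uncertainty case your sketch is the weakest point (``eventually yields'' $\overline{\sigma}^2\leq(\hat{\mathbb{E}}[|X|])^2$): the paper gets there concretely by also using a \emph{negative} value $\lambda=-\alpha$ in the first-moment identities, which yields $f(-\alpha)=1-\alpha$ and $\hat{\mathbb{E}}[X]=\hat{\mathbb{E}}[-X]$, and then bounding the cross term $2\hat{\mathbb{E}}[-\alpha f(-\alpha)XY]\leq 2\alpha(1-\alpha)(\hat{\mathbb{E}}[|X|])^2$; your $\psi$-bound route can be completed in the same spirit (bounding $\hat{\mathbb{E}}[\overline{\mu}X^{+}-\underline{\mu}X^{-}]$ by $(\hat{\mathbb{E}}[|X|])^2$), but as written it is only a plan. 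The genuinely different, and in fact valuable, part is (i): after obtaining $f(\lambda)=\sqrt{1-\lambda^2}$ the paper simply asserts that $X$ is $G$-normal, which is immediate from the definition only if the identity is available for all $\lambda\in[-1,1]$, whereas the hypothesis guarantees it only on an interval around $0$. Your local rescaling $\sqrt{s}X+\sqrt{t}Y\overset{d}{=}\sqrt{s+t}X$ for small $s/(s+t)$, the resulting relation $u(t+s,x)=\hat{\mathbb{E}}[u(t,x+\sqrt{s}X)]$, and the derivation of the $G$-heat equation with $G(a)=\tfrac12(\overline{\sigma}^2a^{+}-\underline{\sigma}^2a^{-})$ followed by Peng's characterization supply exactly this missing step (modulo the standard viscosity-solution technicalities hidden in the phrase ``first-order expansion,'' since $u$ need not be smooth). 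So your approach costs an appeal to PDE uniqueness but buys a complete proof of (i) in the genuinely local setting that the theorem actually hypothesizes.
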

\begin{proof}
Denote $\hat{\mathbb{E}}[X]=\overline{\mu}$, $-\hat{\mathbb{E}}[-X]=\underline{\mu}$, $\hat{\mathbb{E}}[X^2]=\overline{\sigma}^2$ and $-\hat{\mathbb{E}}[-X^2]=\underline{\sigma}^2$, then
\begin{align}\label{1}
\hat{\mathbb{E}}\left[\lambda X+f(\lambda)Y\right]=f(\lambda)\overline{\mu}+\lambda^{+}\overline{\mu}-\lambda^{-}\underline{\mu}=\overline{\mu}.
\end{align}
\begin{align}\label{2}
-\hat{\mathbb{E}}\left[-\lambda X-f(\lambda)Y\right]=f(\lambda)\underline{\mu}-\lambda^{-}\overline{\mu}+\lambda^{+}\underline{\mu}=\underline{\mu}.
\end{align}
Without loss of generality, we assume that $\alpha$ and $-\alpha$ are in the domain of $f$ and $\alpha>0$. From \eqref{1} and \eqref{2}, we have
\begin{align}
&\label{a}f(-\alpha)\overline{\mu}-\alpha\underline{\mu}=\overline{\mu},\\
&\label{b}f(-\alpha)\underline{\mu}-\alpha\overline{\mu}=\underline{\mu}.
\end{align}
Hence,
\begin{equation*}
f(-\alpha)(\overline{\mu}-\underline{\mu})+\alpha(\overline{\mu}-\underline{\mu})=(\overline{\mu}-\underline{\mu}).
\end{equation*}
We shall prove $\overline{\mu}=\underline{\mu}$. Otherwise we suppose $\overline{\mu}\neq\underline{\mu}$. Thus $f(-\alpha)=1-\alpha$. Plugging into \eqref{a} and \eqref{b} yields $\overline{\mu}=-\underline{\mu}$. Hence for $\beta\in\mathbb{R}$,
\begin{equation*}
\hat{\mathbb{E}}[\beta X]=\beta^+\hat{\mathbb{E}}[X]+\beta^-\hat{\mathbb{E}}[-X]=|\beta|\hat{\mathbb{E}}[X].
\end{equation*}
Therefore
\begin{align*}
\overline{\sigma}^2&=\hat{\mathbb{E}}\left[\left(-\alpha X+f(-\alpha)Y\right)^2\right]\\
&\leq\hat{\mathbb{E}}\left[\alpha^2 X^2+f(-\alpha)^2Y^2\right]+2\hat{\mathbb{E}}\left[-\alpha f(-\alpha)XY\right]\\
&=\alpha^2\overline{\sigma}^2+f(-\alpha)^2\overline{\sigma}^2+2\alpha f(-\alpha)\overline{\mu}\hat{\mathbb{E}}\left[|X|\right]\\
&\leq\alpha^2\overline{\sigma}^2+f(-\alpha)^2\overline{\sigma}^2+2\alpha f(-\alpha)\left(\hat{\mathbb{E}}\left[|X|\right]\right)^2 .
\end{align*}
Hence
\begin{equation*}
2\alpha(1-\alpha)\overline{\sigma}^2\leq 2\alpha(1-\alpha)\left(\hat{\mathbb{E}}\left[|X|\right]\right)^2,
\end{equation*}
this contradiction yields that $\overline{\mu}=\underline{\mu}$. We now prove that $\overline{\mu}=\underline{\mu}=0$. Otherwise, from \eqref{1}, we have $f(\alpha)=1-\alpha$. Thus
\begin{align*}
\overline{\sigma}^2&=\hat{\mathbb{E}}\left[\left(\alpha X+f(\alpha)Y\right)^2\right]\\
&=\hat{\mathbb{E}}\left[\alpha^2 X^2+f(\alpha)^2Y^2\right]+2\hat{\mathbb{E}}\left[\alpha f(\alpha)XY\right]\\
&=\alpha^2\overline{\sigma}^2+f(\alpha)^2\overline{\sigma}^2+2\alpha f(\alpha)\overline{\mu}^2.
\end{align*}
Hence
\begin{equation*}
2\alpha(1-\alpha)\overline{\sigma}^2\leq 2\alpha(1-\alpha)\overline{\mu}^2,
\end{equation*}
this contradiction yields that $\overline{\mu}=\underline{\mu}=0$. Therefore
\begin{align*}
\hat{\mathbb{E}}\left[\left(\lambda X+f(\lambda)Y\right)^2\right]&=\lambda^2\overline{\sigma}^2+f(\lambda)^2\overline{\sigma}^2=\overline{\sigma}^2\\
-\hat{\mathbb{E}}\left[-\left(\lambda X+f(\lambda)Y\right)^2\right]&=\lambda^2\underline{\sigma}^2+f(\lambda)^2\underline{\sigma}^2=\underline{\sigma}^2.
\end{align*}
Hence, $f(\lambda)=\sqrt{1-\lambda^2}$. Thus $X$ is $G$-normal distributed.
\end{proof}
%\begin{remark}
%Actually, if we only assume that $f(\lambda)$ is a non-negative function defined on some subset of $\mathbb{R}$ and there exist two positive (or negative) values $\lambda_1,\lambda_2$ in the domain of $f$ such that $$\lambda_1 X+f(\lambda_1)\bar{X}\overset{d}{=}X$$ and $$\lambda_2 X+f(\lambda_2)\bar{X}\overset{d}{=}X$$
%where $\bar{X}$ is an independent copy of $X$, then:
%\begin{description}
%\item[(i)] $X$ is $G$-normal distributed;
%\item[(ii)] $f(\lambda_i)=\sqrt{1-\lambda_{i}^2}$, $i=1,2$.
%\end{description}
%
%\end{remark}

Moreover we can still have the following theorem without the condition that $Y$ and $X$ are identically distributed.
\begin{theorem}\label{thm2}
Let $X,Y$ be two non-degenerate random variables on a sublinear expectation space
$\left(\Omega,\mathcal{H},\hat{\mathbb{E}}\right)$and $f$ be a given non-negative function defined on some interval of $\mathbb{R}$, which contains $0$ as an interior point. Assuming that $Y$ is independent with $X$, and $\lambda X+f(\lambda)Y$ is a non-degenerate random variable whose distribution does not depend on $\lambda$ for all $\lambda$ such that $f(\lambda)$ is non-negative, then:
\begin{description}
\item[(i)] $f(\lambda)=\sqrt{a-b\lambda^2}$ for some $a,b>0$.
\item[(ii)] $X$ and $Y$ are $G$-normal distributed with $\overline{\sigma}^2_X=b\overline{\sigma}^2_Y,~\underline{\sigma}^2_X=b\underline{\sigma}^2_Y$.
\end{description}
\end{theorem}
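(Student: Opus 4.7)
My plan mimics the proof of Theorem~\ref{thm1}, tracking upper and lower moments of $X$ and $Y$ separately since they need no longer share a distribution. Setting $\lambda=0$ in the hypothesis shows that every $\lambda X+f(\lambda)Y$ has the distribution of $f(0)Y$, and non-degeneracy forces $f(0)>0$; I set $a:=f(0)^{2}$ and denote by $\overline{\mu}_X,\underline{\mu}_X,\overline{\mu}_Y,\underline{\mu}_Y$ and $\overline{\sigma}^{2}_X,\underline{\sigma}^{2}_X,\overline{\sigma}^{2}_Y,\underline{\sigma}^{2}_Y$ the upper and lower first and second moments of $X$ and $Y$.

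Applying $\hat{\mathbb{E}}$ and $-\hat{\mathbb{E}}[-\cdot]$ to both sides of $\lambda X+f(\lambda)Y\overset{d}{=}f(0)Y$, and using independence to split the $\lambda^{+},\lambda^{-}$ contributions of $X$ against the fixed-sign $f(\lambda)$ contribution of $Y$, yields at $\lambda=\pm\alpha$ (for $\alpha>0$ with $\pm\alpha$ in the domain of $f$) a linear system of four equations in the four means. Pairwise subtraction produces identities such as $(f(\alpha)-f(-\alpha))(\overline{\mu}_Y-\underline{\mu}_Y)=0$, and the case analysis of Theorem~\ref{thm1}---assume some mean is non-zero, bound $\hat{\mathbb{E}}\!\left[(\alpha X+f(\alpha)Y)^{2}\right]$ above by sub-additivity, and invoke the non-degeneracy of $X,Y$---yields a contradiction in every branch, so $\overline{\mu}_X=\underline{\mu}_X=\overline{\mu}_Y=\underline{\mu}_Y=0$.

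With all four means zero, $E_{P}[Y]=0$ for every $P$ in a representing set of $\hat{\mathbb{E}}$, hence $\hat{\mathbb{E}}[cY+dY^{2}]=d\overline{\sigma}^{2}_Y$ for any $c\in\mathbb{R}$ and $d\geq 0$. Conditioning on $X$ via independence gives
\[
\hat{\mathbb{E}}\!\left[(\lambda X+f(\lambda)Y)^{2}\right]=\hat{\mathbb{E}}\!\left[\lambda^{2}X^{2}+f(\lambda)^{2}\overline{\sigma}^{2}_Y\right]=\lambda^{2}\overline{\sigma}^{2}_X+f(\lambda)^{2}\overline{\sigma}^{2}_Y,
\]
which must equal $a\overline{\sigma}^{2}_Y$, so $f(\lambda)^{2}=a-(\overline{\sigma}^{2}_X/\overline{\sigma}^{2}_Y)\lambda^{2}$. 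The parallel computation with $-\hat{\mathbb{E}}[-\cdot]$ gives $f(\lambda)^{2}=a-(\underline{\sigma}^{2}_X/\underline{\sigma}^{2}_Y)\lambda^{2}$. Matching the two quadratics in $\lambda$ forces $\overline{\sigma}^{2}_X/\overline{\sigma}^{2}_Y=\underline{\sigma}^{2}_X/\underline{\sigma}^{2}_Y=:b>0$, proving~(i) with $f(\lambda)=\sqrt{a-b\lambda^{2}}$ together with the variance identities of~(ii).

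For the $G$-normality in~(ii), set $Z:=X/\sqrt{b}$ and $\mu:=\lambda\sqrt{b/a}$, so that on a neighborhood of $\mu=0$ the hypothesis reads $\mu Z+\sqrt{1-\mu^{2}}\,Y\overset{d}{=}Y$, with $Z\bot Y$, zero means, and $\overline{\sigma}^{2}_Z=\overline{\sigma}^{2}_Y$, $\underline{\sigma}^{2}_Z=\underline{\sigma}^{2}_Y$. For each $\varphi\in C_{b.Lip}$ put $u(t,x):=\hat{\mathbb{E}}[\varphi(x+\sqrt{t}\,Y)]$; rewriting the identity via independence as $\hat{\mathbb{E}}[u(1-\mu^{2},\mu Z)]=u(1,0)$ and Taylor-expanding in $\mu$ at $0$ for smooth $\varphi$, the order-$\mu^{2}$ coefficient---using zero means of $Z$ and the matched second moments---produces $\partial_{t}u=G(\partial_{x}^{2}u)$ at $(t,x)=(1,0)$ with $G(\alpha)=\tfrac{1}{2}(\overline{\sigma}^{2}_Y\alpha^{+}-\underline{\sigma}^{2}_Y\alpha^{-})$. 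Shifting the test function and rescaling spread this identity to arbitrary $(t,x)$, so the characterization recalled after Definition~\ref{def2.4} identifies $Y$ as $G$-normal. Once $Y$ is $G$-normal, the semigroup identity $u(a,0)=\hat{\mathbb{E}}[u(a-b\lambda^{2},\sqrt{b}\lambda Y)]$ combined with the original relation $\hat{\mathbb{E}}[u(a-b\lambda^{2},\lambda X)]=u(a,0)$ forces $\hat{\mathbb{E}}[\psi(\lambda X)]=\hat{\mathbb{E}}[\psi(\sqrt{b}\lambda Y)]$ for every $\psi$ in the distribution-separating image of $\varphi\mapsto u(a-b\lambda^{2},\cdot)$, hence $X\overset{d}{=}\sqrt{b}\,Y$ is $G$-normal with the claimed variances. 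I expect the order-$\mu^{2}$ Taylor step---carefully passing the sublinear $\hat{\mathbb{E}}$ through the expansion in $\mu$ to extract the $G$-heat equation---to be the main technical obstacle.
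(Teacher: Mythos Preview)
Your moment computations---forcing all four means to zero and then reading off $f(\lambda)=\sqrt{a-b\lambda^{2}}$ from the upper and lower second moments---track the paper's proof closely. For the $G$-normality in~(ii), however, you take a substantially harder route and miss the paper's key simplifying observation.

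Once $f(\lambda)=\sqrt{a-b\lambda^{2}}$ is known, the paper simply evaluates the hypothesis at the two endpoints $\lambda=0$ and $\lambda=\sqrt{a/b}$: the first gives $\sqrt{a}\,Y$, the second gives $\sqrt{a/b}\,X$, and since both have the common distribution, $\sqrt{a}\,Y\overset{d}{=}\sqrt{a/b}\,X$. Thus your $Z=X/\sqrt{b}$ is not merely second-moment-matched with $Y$ but actually satisfies $Z\overset{d}{=}Y$. Since $Y\bot X$ (hence $Y\bot Z$; note your ``$Z\bot Y$'' has the independence direction reversed, which matters in the sublinear setting), $Y$ is an independent copy of $Z$, and the rescaled identity $\mu Z+\sqrt{1-\mu^{2}}\,Y\overset{d}{=}Z$ is exactly the hypothesis of Theorem~\ref{thm1}. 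That theorem then gives $G$-normality of $Z$, hence of $X$ and $Y$, with no further work.

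Your proposed alternative---Taylor-expanding $\hat{\mathbb{E}}\bigl[u(1-\mu^{2},\mu Z)\bigr]$ in $\mu$ to derive the $G$-heat equation for $u$---is not just longer but has real gaps. Extracting the $\mu^{2}$ coefficient requires $C^{1,2}$-regularity of $u(t,x)=\hat{\mathbb{E}}[\varphi(x+\sqrt{t}\,Y)]$, which is essentially what $G$-normality of $Y$ would give you; without an independent regularity argument the step is circular. And your final step, recovering $X\overset{d}{=}\sqrt{b}\,Y$ from agreement on ``the distribution-separating image of $\varphi\mapsto u(a-b\lambda^{2},\cdot)$'', is unjustified: for fixed $\lambda$ this is a single smoothing operator, and you give no reason it should separate distributions. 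The endpoint trick renders both difficulties moot.
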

\begin{proof}
Denote $\hat{\mathbb{E}}[X]=\overline{\mu}_X$, $-\hat{\mathbb{E}}[-X]=\underline{\mu}_X$, $\hat{\mathbb{E}}[X^2]=\overline{\sigma}^2_X$ and $-\hat{\mathbb{E}}[-X^2]=\underline{\sigma}^2_X$, $\hat{\mathbb{E}}[Y]=\overline{\mu}_Y$, $-\hat{\mathbb{E}}[-Y]=\underline{\mu}_Y$, $\hat{\mathbb{E}}[Y^2]=\overline{\sigma}^2_Y$ and $-\hat{\mathbb{E}}[-Y^2]=\underline{\sigma}^2_Y$, we have
\begin{align}\label{5}
\hat{\mathbb{E}}\left[\lambda X+f(\lambda)Y\right]=f(\lambda)\overline{\mu}_Y+\lambda^+\overline{\mu}_X-\lambda^{-}\underline{\mu}_X=\overline{h}.
\end{align}
\begin{align}\label{6}
-\hat{\mathbb{E}}\left[-\left(\lambda X+f(\lambda)Y\right)\right]=f(\lambda)\underline{\mu}_Y+\lambda^+\underline{\mu}_X-\lambda^{-}\overline{\mu}_X=\underline{h}.
\end{align}
Without loss of generality, we assume that $\alpha$ and $-\alpha$ are in the domain of $f$ and $\alpha>0$. From \eqref{5} and \eqref{6}, we have
\begin{align}
&\label{c}f(-\alpha)\overline{\mu}_Y-\alpha\underline{\mu}_X=\overline{h},\\
&\label{d}f(-\alpha)\underline{\mu}_Y-\alpha\overline{\mu}_X=\underline{h},\\
&\label{e}f(\alpha)\overline{\mu}_Y+\alpha\overline{\mu}_X=\overline{h},\\
&\label{f}f(\alpha)\underline{\mu}_Y+\alpha\underline{\mu}_X=\underline{h}.
\end{align}
Hence,
\begin{align*}
&f(\alpha)(\overline{\mu}_Y-\underline{\mu}_Y)+\alpha(\overline{\mu}_X-\underline{\mu}_X)=(\overline{h}-\underline{h})\\
&f(-\alpha)(\overline{\mu}_Y-\underline{\mu}_Y)+\alpha(\overline{\mu}_X-\underline{\mu}_X)=(\overline{h}-\underline{h}).
\end{align*}
\emph{Step 1.} We now prove that $\overline{\mu}_X=\underline{\mu}_X$ if and only if $\overline{\mu}_Y=\underline{\mu}_Y$. Since $\overline{h},\underline{h}$ are constants, we have $\overline{\mu}_X=\underline{\mu}_X$ if $\overline{\mu}_Y=\underline{\mu}_Y$. We suppose that $\overline{\mu}_Y\neq\underline{\mu}_Y$ if $\overline{\mu}_X=\underline{\mu}_X$. Thus $f(\alpha)=f(-\alpha)$. From \eqref{c} and \eqref{e}, we obtain $\overline{\mu}_X=\underline{\mu}_X=0$. Therefore $f$ is given by
\begin{equation*}
f(\lambda)=\frac{\overline{h}-\underline{h}}{\overline{\mu}_Y-\underline{\mu}_Y},
\end{equation*}
and the domain of $f$ is $\mathbb{R}$. Moreover, we have
\begin{align*}
\overline{\sigma}^2&=\hat{\mathbb{E}}\left[(\lambda X+f(\lambda)Y)^2\right]\\
&\geq \lambda^2\overline{\sigma}^2_X+\left(\frac{\overline{h}-\underline{h}}{\overline{\mu}_Y-\underline{\mu}_Y}\right)^2\overline{\sigma}^2_Y-2\lambda\frac{\overline{h}-\underline{h}}{\overline{\mu}_Y-\underline{\mu}_Y}\hat{\mathbb{E}}[-XY]\\
&\lambda^2\overline{\sigma}^2_X+\left(\frac{\overline{h}-\underline{h}}{\overline{\mu}_Y-\underline{\mu}_Y}\right)^2\overline{\sigma}^2_Y-2\lambda\frac{\overline{h}-\underline{h}}{\overline{\mu}_Y-\underline{\mu}_Y}\left(\hat{\mathbb{E}}[X^2]\right)^{\frac{1}{2}}\left(\hat{\mathbb{E}}[Y^2]\right)^{\frac{1}{2}}\\
&=\left(\lambda\sqrt{\overline{\sigma}^2_X}-\frac{\overline{h}-\underline{h}}{\overline{\mu}_Y-\underline{\mu}_Y}\sqrt{\overline{\sigma}^2_Y}\right)^2\stackrel{\lambda\rightarrow\infty}{\longrightarrow}\infty.
\end{align*}
Since $\overline{\sigma}^2$ is a constant, this contradiction implies that $\overline{\mu}_Y=\underline{\mu}_Y$.\\
\emph{Step 2.} We shall prove $\overline{\mu}_Y=\underline{\mu}_Y$, $\overline{\mu}_X=\underline{\mu}_X$. Otherwise we suppose $\overline{\mu}_Y\neq\underline{\mu}_Y$ (hence $\overline{\mu}_X\neq\underline{\mu}_X$). Thus
 \begin{equation*}
 f(\alpha)=f(-\alpha)=\frac{\overline{h}-\underline{h}}{\overline{\mu}_Y-\underline{\mu}_Y}-\alpha\frac{\overline{\mu}_X-\underline{\mu}_X}{\overline{\mu}_Y-\underline{\mu}_Y}.
 \end{equation*}
 Plugging into \eqref{c} and \eqref{d} yields
 \begin{align*}
 &\frac{\overline{h}-\underline{h}}{\overline{\mu}_Y-\underline{\mu}_Y}\overline{\mu}_Y-\alpha\left(\frac{\overline{\mu}_X-\underline{\mu}_X}{\overline{\mu}_Y-\underline{\mu}_Y}\overline{\mu}_Y+\underline{\mu}_X\right)=\overline{h},\\
 &\frac{\overline{h}-\underline{h}}{\overline{\mu}_Y-\underline{\mu}_Y}\underline{\mu}_Y-\alpha\left(\frac{\overline{\mu}_X-\underline{\mu}_X}{\overline{\mu}_Y-\underline{\mu}_Y}\underline{\mu}_Y+\overline{\mu}_X\right)=\underline{h}.
 \end{align*}
 Hence
 \begin{align*}
 &\underline{\mu}_X=-\frac{\overline{\mu}_X-\underline{\mu}_X}{\overline{\mu}_Y-\underline{\mu}_Y}\overline{\mu}_Y,\\
 &\overline{\mu}_X=-\frac{\overline{\mu}_X-\underline{\mu}_X}{\overline{\mu}_Y-\underline{\mu}_Y}\underline{\mu}_Y.
 \end{align*}
 Since $f(\alpha)=f(-\alpha)$, from \eqref{c} and \eqref{e}, we obtain $\overline{\mu}_X=-\underline{\mu}_X$. Thus $\overline{\mu}_Y=-\underline{\mu}_Y$, $\overline{h}=-\underline{h}$. Therefore for $\beta\in\mathbb{R}$,
\begin{align*}
\hat{\mathbb{E}}[\beta X]=\beta^+\hat{\mathbb{E}}[X]+\beta^-\hat{\mathbb{E}}[-X]=|\beta|\hat{\mathbb{E}}[X],\\
\hat{\mathbb{E}}[\beta Y]=\beta^+\hat{\mathbb{E}}[Y]+\beta^-\hat{\mathbb{E}}[-Y]=|\beta|\hat{\mathbb{E}}[Y].
\end{align*}
If $\overline{\mu}_Y=0$, we have $\overline{\mu}_Y=\underline{\mu}_Y=\overline{\mu}_X=\underline{\mu}_X=\overline{h}=\underline{h}=0$. Now we suppose that $\overline{\mu}_Y=-\underline{\mu}_Y\neq 0$. Hence $\overline{\mu}_X=-\underline{\mu}_X\neq 0$. Therefore with $\frac{\overline{h}}{\overline{\mu}_Y}>0$, $f$ is given by
\begin{equation*}
f(\lambda)=\frac{\overline{h}}{\overline{\mu}_Y}-|\lambda|\frac{\overline{\mu}_X}{\overline{\mu}_Y}.
\end{equation*}
If $\frac{\overline{\mu}_X}{\overline{\mu}_Y}<0$, the domain of $f$ is $\mathbb{R}$. Therefore,
\begin{align*}
\overline{\sigma}^2&=\hat{\mathbb{E}}\left[(\lambda X+f(\lambda)Y)^2\right]\\
&\geq \lambda^2\overline{\sigma}^2_X+\left(\frac{\overline{h}}{\overline{\mu}_Y}-|\lambda|\frac{\overline{\mu}_X}{\overline{\mu}_Y}\right)^2\overline{\sigma}^2_Y-2\lambda\left(\frac{\overline{h}}{\overline{\mu}_Y}-|\lambda|\frac{\overline{\mu}_X}{\overline{\mu}_Y}\right)\hat{\mathbb{E}}[-XY]\\
&\geq\lambda^2\overline{\sigma}^2_X+\left(\frac{\overline{h}}{\overline{\mu}_Y}-|\lambda|\frac{\overline{\mu}_X}{\overline{\mu}_Y}\right)^2\overline{\sigma}^2_Y-2\lambda\left(\frac{\overline{h}}{\overline{\mu}_Y}-|\lambda|\frac{\overline{\mu}_X}{\overline{\mu}_Y}\right)\left(\hat{\mathbb{E}}[X^2]\right)^{\frac{1}{2}}\left(\hat{\mathbb{E}}[Y^2]\right)^{\frac{1}{2}}\\
&=\left(\lambda\sqrt{\overline{\sigma}^2_X}-\left(\frac{\overline{h}}{\overline{\mu}_Y}-|\lambda|\frac{\overline{\mu}_X}{\overline{\mu}_Y}\right)\sqrt{\overline{\sigma}^2_Y}\right)^2\stackrel{\lambda\rightarrow -\infty}{\longrightarrow}\infty.
\end{align*}
This contradiction implies that $\overline{\mu}_Y=\underline{\mu}_Y$, $\overline{\mu}_X=\underline{\mu}_X$.\\
If $\frac{\overline{\mu}_X}{\overline{\mu}_Y}>0$, the domain of $f$ is $[-\frac{\overline{h}}{\overline{\mu}_X},\frac{\overline{h}}{\overline{\mu}_X}]$. Hence $\overline{h}\neq 0$. Therefore
\begin{equation*}
\overline{\sigma}^2=\hat{\mathbb{E}}\left[\left(\frac{\overline{h}}{\overline{\mu}_X}X\right)^2\right]=\hat{\mathbb{E}}\left[\left(\frac{\overline{h}}{\overline{\mu}_Y}Y\right)^2\right],
\end{equation*}
from which we obtain $\overline{\sigma}^2_X=\frac{\overline{\mu}^2_X}{\overline{h}^2}\overline{\sigma}^2$ and $\overline{\sigma}^2_Y=\frac{\overline{\mu}^2_Y}{\overline{h}^2}\overline{\sigma}^2$. Thus
\begin{align*}
\hat{\mathbb{E}}\left[\left(\alpha X+f(\alpha)Y\right)^2\right]&\leq \hat{\mathbb{E}}\left[\alpha^2 X^2+f(\alpha)^2Y^2\right]+2\alpha f(\alpha)E[XY]\\
&=\alpha^2\overline{\sigma}^2_X+f(\alpha)^2\overline{\sigma}^2_Y+2\alpha f(\alpha)\overline{\mu}_Y\hat{\mathbb{E}}\left[|X|\right]\\
&<\alpha^2\overline{\sigma}^2_X+f(\alpha)^2\overline{\sigma}^2_Y+2\alpha f(\alpha)\left(\hat{\mathbb{E}}\left[|Y|^2\right]\right)^{\frac{1}{2}}\left(\hat{\mathbb{E}}\left[|X|^2\right]\right)^{\frac{1}{2}}\\
&=\left(\alpha\sqrt{\overline{\sigma}^2_X}+f(\alpha)\sqrt{\overline{\sigma}^2_Y}\right)^2\\
&=\left(\alpha\frac{\overline{\mu}_X}{\overline{h}}+1-\alpha\frac{\overline{\mu}_X}{\overline{h}}\right)^2\overline{\sigma}^2=\overline{\sigma}^2.
\end{align*}
This contradiction implies that $\overline{\mu}_Y=\underline{\mu}_Y$, $\overline{\mu}_X=\underline{\mu}_X$.\\
\emph{Step 3.} We shall prove $\overline{\mu}_Y=\underline{\mu}_Y=0$, hence $\overline{\mu}_X=\underline{\mu}_X=0$. Otherwise suppose $\overline{\mu}_Y\neq 0$. Thus
\[
f(\lambda)=\frac{\overline{h}}{\overline{\mu}_Y} -\lambda\frac{\overline{\mu}_X}{\overline{\mu}_Y}.
 \]
Hence,
\begin{align*}
\overline{\sigma}^2&=\hat{\mathbb{E}}\left[(\lambda X+f(\lambda)Y)^2\right]\\
&=\lambda^2\overline{\sigma}^2_X+f(\lambda)^2\overline{\sigma}^2_X+2\lambda f(\lambda)\overline{\mu}_X\overline{\mu}_Y\\
&=\lambda^2\left(\overline{\sigma}^2_X+\frac{\overline{\mu}^2_X}{\overline{\mu}^2_Y}\overline{\sigma}^2_Y-2\overline{\mu}^2_X\right)-2\lambda\overline{h}\overline{\mu}_X\left(\frac{\overline{\sigma}^2_Y}{\overline{\mu}^2_Y}-1\right)+\frac{\overline{h}^2}{\overline{\mu}^2_Y}\overline{\sigma}^2_Y.
\end{align*}
depends on $\lambda$.
This contraction yields that $\overline{\mu}_Y=0$. Therefore
\begin{align*}
\overline{\sigma}^2=\lambda^2\overline{\sigma}^2_X+f(\lambda)\overline{\sigma}^2_Y,
\end{align*}
\begin{align*}
\underline{\sigma}^2=\lambda^2\underline{\sigma}^2_X+f(\lambda)\underline{\sigma}^2_Y.
\end{align*}
We have $f(\lambda)=\sqrt{\frac{\overline{\sigma}^2}{\overline{\sigma}^2_Y}-\frac{\overline{\sigma}^2_X}{\overline{\sigma}^2_Y}\lambda^2}=\sqrt{a-b\lambda^2}$
where $a=\frac{\overline{\sigma}^2}{\overline{\sigma}^2_Y}$, $b=\frac{\overline{\sigma}^2_X}{\overline{\sigma}^2_Y}$ and $\underline{\sigma}^2=a\underline{\sigma}^2_Y,~\underline{\sigma}^2_X=b\underline{\sigma}^2_Y$.\\
The domain of $f(\lambda)$ is the interval $\left[-\sqrt{\frac{a}{b}},\sqrt{\frac{a}{b}}\right]$.\\
For $\lambda=0$ or $\lambda=\sqrt{\frac{a}{b}}$, the two random variables $\sqrt{a}Y$ and $\sqrt{\frac{a}{b}}X$ are identically distributed according to
\[
\lambda X+\sqrt{a-b\lambda^2}Y\overset{d}{=}\left(\sqrt{1-\frac{b}{a}\lambda^2}\right)\left(\sqrt{\frac{a}{b}}X\right)+\left(\lambda\sqrt{\frac{b}{a}}\right)\left(\sqrt{a}Y\right)
\]
By Theorem (\ref{thm1}),$\sqrt{\frac{a}{b}}X$ and $\sqrt{a}Y$ are $G$-normal distributed. Hence $X$ and $Y$ are $G$-normal distributed.
\end{proof}
\begin{remark}
%To our knowledge, there are no results related to characterize the $G$-normality except the definition of $G$-normal distribution.
Note that the distribution uncertainty of a $G$-normal distribution is not just the parameter of the classical normal distributions, it is difficult to obtain characterization results of $G$-normal distribution. Let $g_{\lambda}(u,v)=\lambda u+f(\lambda)v$ be a family of functions from $\mathbb{R}\times\mathbb{R}\rightarrow\mathbb{R}$. With the nondependence of the distribution of $g_{\lambda}(X,Y)$ upon $\lambda$, our results concern which possible forms of $g_{\lambda}(u,v)$ provide $G$-normality of $X$ and $Y$. In conclusion , our results complement the characterizations of $G$-normal distribution.
\end{remark}
\section*{Acknowledgment}

%%%%%%%%%%%%%%%%%%%%%%%%%%%%%%%%%%%%%%%%%%%%%%%%%%%%%%%%%%%%%%%%%%%%%%%%%%%%%%%%%%%


\begin{thebibliography}{99}                                                                                               %

%
%\bibitem{CC} \textsc{Cabre, X.} and \textsc{Caffarelli, L. A.} (1997) Fully nonlinear elliptic partial differential
%equations, \textit{American Math. Society.}
%
%
%
%\bibitem{DHP} \textsc{Denis, L., Hu, M.} and \textsc{Peng, S.} (2011) {Function spaces and capacity related to a sublinear expectation: application
%to G-Brownian motion pathes.} \textit{Potential Analysis,} \textbf{
%34(2)} 139-161.
%
%
%\bibitem {Peng1}\textsc{Peng, S.} (1991) Probabilistic interpretation for systems of
%quasilinear parabolic partial differential equation.
%\textit{Stochastics Stochastics Rep.} \textbf{37} 61--74.

%\bibitem {Peng2}\textsc{Peng, S.} (1992) A nonlinear Feynman-Kac formula and
%applications. \textit{Control theory, stochastic analysis and
%applications (Hangzhou, 1991).} 173--184. World Sci. Publ., River
%Edge, NJ.
\bibitem{Ka} \textsc{Kagan, A. M., Linnik, Y. V. and Rao, C. R.} (1973) {Chatacterization Problems in Mathematical Statistics, Wiley, New York.}

\bibitem{Ng} \textsc{Nguyen, T. T. and Sampson A. R.} (1991) {A note on characterizations of mutivariate stable distributions,}
\textit{Ann. Inst. Statist. Math. Vol. 43, No. 4,} 193--801.

\bibitem{Peng1} \textsc{Peng, S.} (2004) Filtration consistent nonlinear expectations and evaluations
of contingent claims, \textit{Acta Mathematicae Applicatae Sinica,
English Series 20(2),} 1--24.

\bibitem {Peng2}\textsc{Peng, S.} (2005)  {Nonlinear expectations and nonlinear Markov chains,}
\textit{Chin. Ann. Math. 26B(2),} 159--184

\bibitem {Peng3}\textsc{Peng, S.} (2009).  {Survey on normal distributions, central limit theorem, Brownian motion and the related stochastic calculus under sublinear expectations,}
\textit{Science in China Series A: Mathematics, 52(7),} 1391--1411.

\bibitem{Peng4}\textsc{Peng,S} (2010) {Nolinear expectations and stochastic calculus under uncertainty.} Preprint (\href{http://arxiv.org/abs/1002.4546v1}{arxiv:1002.4546v1}).
2010.


\end{thebibliography}
\end{document}